\documentclass[11pt]{amsart}

\usepackage[a4paper,margin=1in]{geometry}
\usepackage{amsmath,amssymb,amsthm}
\usepackage{hyperref}
\hypersetup{hidelinks}
\usepackage{enumitem}

\newtheorem{theorem}{Theorem}[section]
\newtheorem{lemma}[theorem]{Lemma}
\newtheorem{proposition}[theorem]{Proposition}

\newcommand{\Cstar}{C^{*}}
\newcommand{\N}{\mathbb{N}}
\newcommand{\C}{\mathbb{C}}
\newcommand{\Z}{\mathcal{Z}}
\newcommand{\Cu}{\mathrm{Cu}}
\newcommand{\K}{\mathcal{K}}
\newcommand{\Her}{\mathrm{Her}}
\newcommand{\QT}{\mathrm{QT}}
\newcommand{\U}{\mathcal{U}}
\newcommand{\supp}{\operatorname{supp}}

\makeatletter
\newcommand{\reqnoslocal}{\tagsleft@false\let\veqno\@@eqno}
\makeatother

\newtheorem{innercustomthm}{Theorem}
\newenvironment{customthm}[1]
  {\renewcommand\theinnercustomthm{#1}\innercustomthm}
  {\endinnercustomthm}

\title{Homotopies of Constant Cuntz Classes in Pure $\Cstar$-Algebras}
\author{Chrisil Ouseph}
\address{Department of Mathematics, Purdue University, 150 N University St, West Lafayette, IN 47907, USA}
\email{couseph@purdue.edu}
\author{Andrew S. Toms}
\address{Department of Mathematics, Purdue University, 150 N University St, West Lafayette, IN 47907, USA}
\email{atoms@purdue.edu}

\begin{document}

\begin{abstract}
Let $A$ be a unital, simple, separable, and pure $\Cstar$-algebra. We prove that, for every $a\in A_+$, the set of positive elements in $A$ having the same Cuntz class as $a$ is path-connected. This may be viewed as a generalization to positive elements of the classical homotopy theory of projection classes, and extends a line of results on the topology of constant Cuntz classes that previously required stronger regularity hypotheses to the class of pure $\Cstar$-algebras, which notably contains all selfless algebras.
\end{abstract}
\maketitle

\section{Introduction}

The study of the topology of spaces of projections in operator algebras has a long history. For simple and non-elementary $\Cstar$-algebras with real rank zero and stable rank one, Zhang computed the higher homotopy groups of Murray-von Neumann equivalence classes of projections and related them to $K$-theory \cite{Zhang1991}. Brown proved that in any $\Cstar$-algebra of stable rank one, unitarily equivalent projections are also homotopic through projections \cite{Brown1995}. Together with cancellation of projections in stable rank one, this shows that a fixed Murray-von Neumann equivalence class of projections is path-connected. In the $\Z$-stable setting, Jiang's unpublished work on nonstable $K$-theory established $K$-stability, yielding the same homotopy computations as Zhang \cite{Jiang1997}, and Hua recently gave a new proof using more modern methods from the classification theory of $\Cstar$-algebras \cite{Hua2025}.

Pureness was introduced by Winter as a regularity condition for the Cuntz semigroup, combining comparison and divisibility \cite{Winter2012}, and may be viewed as an analogue of $\Z$-stability at the level of the Cuntz semigroup \cite{AntoinePereraThiel2018}. It therefore fits naturally into the Toms-Winter regularity picture underlying the classification program: in the unital, simple, separable, nuclear, and non-elementary setting, the only direction that remains open in general would show that strict comparison implies pureness, which in turn implies $\Z$-stability. At the same time, pureness makes sense far beyond the nuclear setting. This broader scope has become especially useful recently through the theory of selfless $\Cstar$-algebras introduced in \cite{Robert2025}: every selfless $\Cstar$-algebra is now known to be pure \cite{Gould2026}, and Robert recently showed that any unital, simple, infinite-dimensional, and monotracial $\Cstar$-algebra with strict comparison with respect to its trace is selfless \cite{Robert2026}. Furthermore, selflessness results for broad classes of reduced group algebras, reduced free products, and graph products have produced a rapidly expanding supply of nonnuclear examples \cite{AmrutamGaoKunnawalkamPatchell2025, HayesKunnawalkamRobert2025, FloresKlisseOCobhthaighPagliero2026}.

Cuntz equivalence generalizes Murray-von Neumann equivalence from projections to all positive elements. It is therefore natural to ask which topological features of projection classes persist for arbitrary Cuntz classes. The second author initiated the study of such spaces, proving path-connectedness under substantially stronger structural assumptions including real rank zero and approximate divisibility \cite{Toms2023} and subsequently computing their higher homotopy groups \cite{Toms2024}. We later proved that noncompact constant Cuntz classes are contractible for all unital, simple, and $\Z$-stable algebras \cite{OusephToms2024}, completing the computation of the homotopy groups of all constant Cuntz classes in this setting. The present paper extends the path-connectedness result to the weaker regularity assumption of pureness without any assumptions about the (quasi)tracial state space or other structural properties of the algebra:

\begin{customthm}{I}\label{thm:main}
Let $A$ be a unital, simple, separable, and pure $\Cstar$-algebra. For any $a\in A_+$, the subspace $\{b\in A_+:[b]=[a]\}$ is path-connected.
\end{customthm}

Our proof rests on several delicate constructions. We adapt recent work of the second author and Wan \cite{TomsWan2025} to produce, inside the hereditary subalgebra generated by a given positive element, a Cuntz-equivalent element all of whose quasitracial spectral measures are diffuse away from zero. In its functional calculus, we construct a summable orthogonal decomposition over the complementary intervals of the ternary Cantor set. Slight enlargements of those pieces provide enough room to move them one at a time through null-homotopic unitary conjugations into an arbitrary second representative of the same Cuntz class, while leaving just enough rank for the noncompact tail.

The paper is organized as follows: Section~\ref{sec:preliminaries} recalls essential facts about Cuntz equivalence and dimension functions, then introduces the required generalization of the Toms-Wan result. Section~\ref{sec:decomposition} constructs the orthogonal Cantor decomposition and proves the rank-preserving absorption lemma. Section~\ref{sec:mainproof} iterates the previous lemma and completes the proof of Theorem~\ref{thm:main}.

\subsection*{Acknowledgements}

We would like to thank Hao Wan for allowing us to include the proof of Theorem~\ref{thm:hereditary-prescribed-measures}, which is to appear as part of a follow-up paper to \cite{TomsWan2025}.

\subsection*{AI Statement}

ChatGPT was used for proofreading and reference checking. The mathematical content of the paper is due solely to the authors save for the theorem acknowledged above.

\section{Preliminaries}\label{sec:preliminaries}

We use standard notation and facts about the Cuntz semigroup; see, for example, \cite{AraPereraToms2011,GardellaPerera}. For positive elements $a,b\in A$, we write $a\precsim b$ if there is a sequence $(x_n)$ in $A$ such that $x_nbx_n^*\to a$. We say that $a$ and $b$ are Cuntz equivalent, written $a\sim b$, if $a\precsim b$ and $b\precsim a$. The Cuntz semigroup $\Cu(A)$ is the set of Cuntz equivalence classes $[a]$ of elements $a\in(A\otimes\K)_+$, ordered by Cuntz subequivalence and equipped with addition by orthogonal direct sum. If $x,y\in\Cu(A)$, then $x$ is compactly contained in $y$, written $x\ll y$, if whenever $y\leq\sup_n y_n$ for an increasing sequence $(y_n)$ in $\Cu(A)$, one has $x\leq y_n$ for some $n$. A class $x$ is compact if $x\ll x$.

For a unital $\Cstar$-algebra $A$, write $\QT(A)$ for the set of normalized $2$-quasitraces on $A$ endowed with the topology of pointwise convergence. For $\tau\in\QT(A)$ and $a\in(A\otimes\K)_+$, the associated dimension function is defined as
\[d_\tau(a)=\lim_{n\to\infty}\tau(a^{1/n}).\]
If $a\in A_+$, the restriction of $\tau$ to the abelian subalgebra $\Cstar(1_A,a)$ is a state whose representing probability measure on $\sigma(a)$ will be denoted by $\mu_{\tau,a}$. Thus, $d_\tau(f(a))=\mu_{\tau,a}(\supp_o(f))$, where $\supp_o(f)$ denotes the open support of $f\in C(\sigma(a))_+$. We reserve $\lambda$ for the Lebesgue measure on $[0,1]$ and write $\Her(a)=\overline{aAa}$.

We say that a $\Cstar$-algebra $A$ with $\QT(A)\ne\varnothing$ has strict comparison if, for any $a,b\in(A\otimes\K)_+$, we have $a\precsim b$ whenever $d_\tau(a)<d_\tau(b)$ for every $\tau\in\QT(A)$. The normalized quasitraces are understood with their canonical extensions to matrix algebras and to $A\otimes\K$. Further, we say that a $\Cstar$-algebra $A$ is pure if $\Cu(A)$ is almost unperforated and almost divisible.

The first two lemmas provide us with elementary yet useful homotopies that maintain Cuntz equivalence.

\begin{lemma}\label{lem:spectral-gap-homotopy}
Let $A$ be a unital $\Cstar$-algebra and let $a\in A_+$. Suppose that zero is not an accumulation point in $\sigma(a)$. Then the straight-line homotopy $h(t)=(1-t)a+t\chi_{(0,\infty)}(a)$ for $0\le t\leq1$ is a path of Cuntz-equivalent elements.
\end{lemma}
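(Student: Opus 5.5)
The plan is to pass to the spectrum and reduce everything to a comparison of open supports under continuous functional calculus. Since zero is not an accumulation point of $\sigma(a)$, either $a=0$ or there is $\delta>0$ with $\sigma(a)\subseteq\{0\}\cup[\delta,\|a\|]$. The case $a=0$ is trivial: $h(t)=0$ for all $t$. In the remaining case, the function $p=\chi_{(0,\infty)}$ is continuous on $\sigma(a)$, so $p(a)\in \Cstar(1_A,a)$ is a projection. Moreover, $h(t)=f_t(a)$ with $f_t(s)=(1-t)s+t\,p(s)$, each $f_t$ is continuous on $\sigma(a)$, and $t\mapsto h(t)$ is norm continuous because it is affine in $t$.

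The key step is to check that each $f_t$ has the same zero set on $\sigma(a)$ as the identity function $\iota(s)=s$. For $s=0$ we have $f_t(0)=0$. For $s\in\sigma(a)\setminus\{0\}$, $s\ge\delta>0$ and $p(s)=1$, so $f_t(s)=(1-t)s+t\ge\min(\delta,1)>0$. Hence $f_t$ and $\iota$ are nonnegative functions on the compact set $\sigma(a)$ with the same open support, namely $\sigma(a)\setminus\{0\}$.

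It remains to invoke the standard fact that for $f,g\in C(\sigma(a))_+$ with equal open supports, $f(a)\sim g(a)$. I would prove this directly rather than cite it. On $\sigma(a)$, the functions $f_t$ and $\iota$ are both bounded below by $c=\min(\delta,1,\delta)>0$ off $\{0\}$, and vanish at $0$. Therefore the quotients $g=\iota/f_t$ and $g'=f_t/\iota$, set to be $0$ at $0$, are continuous on $\sigma(a)$, since $0$ is isolated. This gives exact factorizations $a=g(a)^{1/2}f_t(a)g(a)^{1/2}$ and $f_t(a)=g'(a)^{1/2}\,a\,g'(a)^{1/2}$. Taking constant sequences $x_n=g(a)^{1/2}$, respectively $x_n=g'(a)^{1/2}$, in the definition of $\precsim$ yields $a\precsim h(t)$ and $h(t)\precsim a$. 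So $[h(t)]=[a]$ for every $t\in[0,1]$, and the path consists of Cuntz-equivalent elements.

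The only point requiring care, and the one I would flag as the main (minor) obstacle, is justifying continuity of these quotient functions. This is exactly where the isolation of $0$ in $\sigma(a)$ is used: without a spectral gap, $p$ would not be continuous and $h(1)$ would not even lie in $A$.
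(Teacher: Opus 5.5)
Your proof is correct and is essentially the paper's argument: both use the spectral gap to get the uniform lower bound $(1-t)\delta+t$ on $\sigma(a)\setminus\{0\}$ for the function defining $h(t)$. The paper phrases this as $h(t)\in pAp$ with $h(t)\ge((1-t)\delta+t)p$, so that $p\precsim h(t)\precsim p$ and $h(t)\sim p\sim a$. You instead compare $h(t)$ with $a$ directly through the exact factorizations $a=g(a)^{1/2}h(t)g(a)^{1/2}$ and $h(t)=g'(a)^{1/2}\,a\,g'(a)^{1/2}$, which amounts to the same thing.
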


\begin{proof}
Set $p=\chi_{(0,\infty)}(a)$. The spectral assumption ensures that $p\in \Cstar(1_A,a)$, and that there exists $\delta>0$ with $a\ge\delta p$. For each $t$, the element $h(t)$ belongs to the corner $pAp$ and is bounded below there by $((1-t)\delta+t)p$. Thus $p\precsim h(t)\precsim p$, so $h(t)\sim p\sim a$ for every $t\in[0,1]$.
\end{proof}

\begin{lemma}\label{lem:segment}
Consider a $\Cstar$-algebra $A$ with $a\in A_+$ and $b\in\Her(a)_+$ satisfying $b\sim a$. Then the straight-line homotopy $h(t)=(1-t)a+tb$ for $0\le t\leq1$ is a path of Cuntz-equivalent elements in $\Her(a)_+$.
\end{lemma}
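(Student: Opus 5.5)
We need to show that for each $t\in[0,1]$ the element $h(t)=(1-t)a+tb$ lies in $\Her(a)_+$ and satisfies $h(t)\sim a$. Membership is immediate: $\Her(a)$ is a $\Cstar$-subalgebra containing $a$ and $b$, and its positive cone is convex, so $h(t)\in\Her(a)_+$. The endpoints are fine as well, since $h(0)=a$ and $h(1)=b\sim a$. It therefore remains to show $h(t)\sim a$ for $0<t<1$, which we do by proving the two subequivalences separately.

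For $h(t)\precsim a$, we use that every positive element of $\Her(a)$ is Cuntz below $a$. Indeed, if $c\in\Her(a)_+$, then $c^{1/2}\in\Her(a)$. Hence $c^{1/2}(a+\epsilon)^{-1}a\to c^{1/2}$, for instance by approximating $c^{1/2}$ by elements $axa$. This gives $c=\lim x_n a x_n^*$ with $x_n=c^{1/2}(a+1/n)^{-1}a^{1/2}$. Equivalently, we may cite the standard fact that $\Her(c)\subseteq\Her(a)$ implies $c\precsim a$. Applying this to $c=h(t)$ gives $h(t)\precsim a$.

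For $a\precsim h(t)$, we use the inequality $h(t)\ge(1-t)a$ in $A_+$. For $0\le x\le y$ one has $x\precsim y$; this is the standard fact, obtained by writing $x^{1/2}=\lim v_n y^{1/2}$ with $v_n=x^{1/2}(y+1/n)^{-1/2}$, or by the Rørdam lemma. Hence $a\sim(1-t)a\precsim h(t)$ whenever $t<1$. Together with the previous paragraph this gives $h(t)\sim a$ for all $t\in[0,1)$, and $t=1$ was handled above.

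So the argument uses only that $a$ and $b$ lie in $\Her(a)$ and that $b\sim a$ at the endpoint; nothing substantive depends on $b$ in the interior of the segment. The only place needing care is the standard fact $x\le y\Rightarrow x\precsim y$ and its hereditary-subalgebra counterpart, both well known, so no real obstacle is expected.
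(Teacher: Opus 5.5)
Your proposal is correct and is essentially the paper's argument: the endpoints are equivalent by hypothesis, and for $0<t<1$ one has $h(t)\precsim a$ because $h(t)\in\Her(a)$, and $a\sim(1-t)a\le h(t)$. You also verify the two standard facts the paper cites without proof, namely that $x\le y$ implies $x\precsim y$ and that positive elements of $\Her(a)$ are Cuntz below $a$, and these verifications are sound.
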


\begin{proof}
The endpoints are Cuntz equivalent by assumption. For $0<t<1$, we observe that $a\sim(1-t)a\le h(t)$ as well as $h(t)\precsim a$, since $h(t)\in\Her(a)$, completing the proof.
\end{proof}

The next proposition combines \cite[Theorem 3.5]{BrownCiuperca2009} and \cite[Proposition 4.3]{GardellaPerera} to give several useful characterizations of compactness (and noncompactness) of Cuntz classes.

\begin{proposition}\label{prop:compact}
Suppose $A$ is a unital $\Cstar$-algebra with stable rank one. For any $a\in A_+$, the following are equivalent:
\begin{enumerate}[label=(\roman*)]
\item The class $[a]$ is compact.
\item There exists some $\varepsilon>0$ such that $\sigma(a)\cap (0,\varepsilon]=\varnothing$.
\item There exists some $\varepsilon>0$ such that $a\sim (a-\varepsilon)_+$.
\item There exists some projection $p\in A$ such that $a\sim p$.
\end{enumerate}
\end{proposition}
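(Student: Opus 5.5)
The plan is to prove the cycle (ii)$\Rightarrow$(iii)$\Rightarrow$(iv)$\Rightarrow$(i)$\Rightarrow$(ii). Stable rank one is used only in the last implication.

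For (ii)$\Rightarrow$(iii), note that zero is not an accumulation point of $\sigma(a)$. Since $\sigma(a)\cap(0,\varepsilon]=\varnothing$, the element $(a-\varepsilon/2)_+$ is a continuous function of $a$ with the same open support in $\sigma(a)$ as $a$ itself. Hence $(a-\varepsilon/2)_+\sim a$; that is, (iii) holds with $\varepsilon/2$ in place of $\varepsilon$.

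For (iii)$\Rightarrow$(iv), I would use the stable rank one hypothesis in the standard way. By Rørdam's lemma, $(a-\varepsilon)_+\precsim a$ means that $(a-\varepsilon)_+$ is Cuntz equivalent to an element of the hereditary subalgebra, and in stable rank one, $a\sim(a-\varepsilon)_+$ gives a unitary $u$ with $u\,\Her((a-\varepsilon)_+)\,u^*\supseteq\Her(a)$, up to conjugation. Take a continuous $g$ that equals $1$ on $[\varepsilon/2,\infty)$, vanishes at $0$, and satisfies $g(a)(a-\varepsilon)_+=(a-\varepsilon)_+$. Then $g(a)$ acts as a unit on $\Her((a-\varepsilon)_+)$, and transporting this along $a\sim(a-\varepsilon)_+$ shows that $\Her(a)$ is unital. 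A unital hereditary subalgebra $\Her(a)=pAp$ has unit a projection $p$, and $a\sim p$. This is exactly the argument of \cite[Theorem 3.5]{BrownCiuperca2009}, which I would cite rather than redo.

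For (iv)$\Rightarrow$(i), it suffices to show $[p]\ll[p]$. Suppose $[p]\le\sup_n y_n$. Because $p=(p-1/2)_+\cdot 2$ up to scaling, the standard fact that $[(b-\delta)_+]\ll[b]$ gives $[p]=[(p-1/2)_+]\ll[p]$ directly. So $[a]=[p]$ is compact.

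For (i)$\Rightarrow$(ii), which is the substantive direction and the main obstacle, I would follow \cite[Proposition 4.3]{GardellaPerera}. Since $[a]=\sup_n[(a-1/n)_+]$ and $[a]\ll[a]$, there is $n$ with $[a]\le[(a-1/n)_+]$, so $a\sim(a-\delta)_+$ for $\delta=1/n$. Stable rank one then gives, as in the previous step, that $\Her(a)$ contains an element acting as a unit, i.e.\ $\Her(a)=pAp$ for a projection $p$. Then $a$ is strictly positive in the unital algebra $pAp$, hence invertible there. Consequently $\sigma(a)\setminus\{0\}$ is bounded away from $0$, which is (ii). The delicate point is extracting the unit of $\Her(a)$ from $a\sim(a-\delta)_+$. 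This is where stable rank one enters, through the fact that Cuntz subequivalence is implemented by unitary conjugation of hereditary subalgebras.
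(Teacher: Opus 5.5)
Your overall plan matches the paper, which gives no argument of its own and simply combines \cite[Theorem~3.5]{BrownCiuperca2009} with \cite[Proposition~4.3]{GardellaPerera}. The routine parts of your proposal are correct and need no hypothesis on $A$: (ii)$\Rightarrow$(iii), (iv)$\Rightarrow$(i), (i)$\Rightarrow$(iii) via $[a]=\sup_n[(a-1/n)_+]$, and the deduction ``$\Her(a)=pAp$ unital $\Rightarrow$ $a$ invertible in $pAp$ $\Rightarrow$ spectral gap''. Since (iii)$\Rightarrow$(i) is also immediate from $[(a-\varepsilon)_+]\ll[a]$, the only real content is that (iii) forces $\Her(a)$ to be unital, and your cycle uses this twice. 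Your opening claim that stable rank one enters only in the last implication also contradicts your own (iii)$\Rightarrow$(iv).

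The mechanism you sketch for that step does not work, so your proof rests entirely on the citation. Stable rank one gives, for each $\delta>0$, a unitary $w$ with $w(a-\delta)_+w^*\in\Her((a-\varepsilon)_+)$. Your containment $u\Her((a-\varepsilon)_+)u^*\supseteq\Her(a)$ is the $\delta=0$ case of this, which is not available and is no easier than the statement itself. An element $g(a)$ acting as a unit on $\Her((a-\varepsilon)_+)$ from outside carries no information either ($1_A$ does that for every hereditary subalgebra), and ``transporting'' it does not put a unit inside $\Her(a)$. Data of this kind cannot suffice without finiteness: in $B(\ell^2(\mathbb{Z}))$, the diagonal contraction $e$ with entries $\dots,0,0,\tfrac12,1,1,\dots$ and the bilateral shift $u$ satisfy $e\,(ueu^*)=ueu^*$, yet $e$ is not a projection.

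A correct version supplies exactly the missing cancellation. For $0<\delta<\varepsilon$, let $e=f(a)$ with $f=0$ on $[0,\delta]$ and $f=1$ on $[\varepsilon,\infty)$, and put $e'=wew^*\in\Her((a-\varepsilon)_+)$. Then $ee'=e'$, so $e'\in\Her((e-\eta)_+)$ for $\eta<1$, and hence $[e]=[e']\le[(e-\eta)_+]\ll[e]$. Also $e-e^2\perp e'$ inside $\Her(e)$, so $[e-e^2]+[e']\le[e']\ll[e']$. Weak cancellation of $\Cu(A)$ under stable rank one (R{\o}rdam--Winter) then gives $e=e^2$, i.e.\ $\sigma(a)\cap(\delta,\varepsilon)=\varnothing$, and letting $\delta\to0$ yields (ii). Brown--Ciuperca reach the corresponding conclusion through stable finiteness and Hilbert modules. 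Either way, a finiteness argument of this kind is the actual content of the step, and your sketch omits it.
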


Symmetrizing the hypotheses of \cite[Proposition 5.9]{AraPereraToms2011} provides a characterization of Cuntz equivalence for noncompact classes in terms of dimension functions under strict comparison:

\begin{proposition}\label{prop:rank-determines-class}
If $A$ is a unital and simple $\Cstar$-algebra with strict comparison and $a,b\in A_+$ have noncompact Cuntz classes, then
\[a\sim b\quad\Longleftrightarrow\quad d_\tau(a)=d_\tau(b)\ \text{for every }\tau\in\QT(A).\]
\end{proposition}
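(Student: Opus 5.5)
The forward implication is immediate: $d_\tau$ depends only on the Cuntz class. If $a\precsim b$, then $d_\tau(a)\le d_\tau(b)$ for every $\tau\in\QT(A)$, and symmetrically, so $a\sim b$ forces $d_\tau(a)=d_\tau(b)$ for all $\tau$. This step uses neither simplicity nor strict comparison.

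For the converse, assume $d_\tau(a)=d_\tau(b)$ for every $\tau$. By symmetry it suffices to show $a\precsim b$. Since $[a]=\sup_{\varepsilon>0}[(a-\varepsilon)_+]$, it is enough to prove $(a-\varepsilon)_+\precsim b$ for every $\varepsilon>0$. This is the content of \cite[Proposition 5.9]{AraPereraToms2011}: in a simple algebra with strict comparison, if $[b]$ is noncompact and $d_\tau(a)\le d_\tau(b)$ for all $\tau$, then $a\precsim b$. Applying it once with the roles $(a,b)$ and once with $(b,a)$, using noncompactness of $[b]$ and of $[a]$ respectively, gives $a\sim b$.

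The substance, should one reprove that proposition, is a strict inequality $d_\tau((a-\varepsilon)_+)<d_\tau(b)$ holding uniformly in $\tau$, so that strict comparison applies. The key points are the following.

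\emph{Existence of a gap.} Noncompactness of $[b]$ means $(b-\delta)_+\not\sim b$ for all $\delta>0$. Hence $\sigma(b)$ accumulates at $0$, as in Proposition~\ref{prop:compact} where stable rank one is available; in general, $0$ must be a non-isolated point of $\sigma(b)$, since otherwise $b$ would be Cuntz equivalent to its support projection and $[b]$ would be compact. Choose a nonzero $g\in C^*(b)_+$ supported in $(0,\delta)$ with $g\perp (b-\delta)_+$. By simplicity and unitality, $\inf_\tau d_\tau(g)>0$, using compactness of $\QT(A)$ and lower semicontinuity of $\tau\mapsto d_\tau(g)$, which is strictly positive at every point. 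Thus $d_\tau((b-\delta)_+)+\eta\le d_\tau(b)$ for some $\eta>0$.

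\emph{Comparing $(a-\varepsilon)_+$ with $b$.} Strict comparison requires $d_\tau((a-\varepsilon)_+)<d_\tau(b)$ for every $\tau$, whereas we only know $d_\tau((a-\varepsilon)_+)\le d_\tau(a)=d_\tau(b)$. One route uses the uniform gap $\eta$ from the previous step. Another uses a Dini-type argument: the functions $\tau\mapsto d_\tau(f_\varepsilon(a))$, for a continuous $f_\varepsilon$ equal to $1$ on $[\varepsilon,\infty)$, are continuous and increase to $d_\tau(a)$, which is lower semicontinuous. When $d_\tau(a)<\infty$ one obtains strict inequality against $d_\tau(b)$ pointwise, because $d_\tau(b)>d_\tau(b')$ for a suitable cut-down $b'$ of $b$ with $d_\tau(b-b')$ positive.

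I expect the main obstacle to be exactly this uniformity issue: converting the equality $d_\tau(a)=d_\tau(b)$ into strict inequalities $d_\tau((a-\varepsilon)_+)<d_\tau(b)$ at every $\tau$ simultaneously. Here noncompactness is essential, since for projections the conclusion fails without cancellation. Because the cited proposition handles precisely this point, I would simply invoke it symmetrically.
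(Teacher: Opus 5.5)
Your main argument is the paper's: the paper proves this proposition in one line by applying \cite[Proposition~5.9]{AraPereraToms2011} once in each direction, exactly as you do. However, you have put that proposition's noncompactness hypothesis on the wrong element. The cited result requires the \emph{smaller} element to have noncompact class. The paper uses it this way in Lemma~\ref{lem:absorption}(e), where only $[r]$ is known to be noncompact.

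The version you state, with $[b]$ noncompact and $d_\tau(a)\le d_\tau(b)$, is false. In $M_{2^\infty}$, let $b=\sum_{n\ge1}2^{-n}p_n$, where $p_n$ is a trace-$1/2$ projection in the $n$-th tensor factor, so that $\mu_{\tau,b}=\lambda$, and let $a=1_A$. Then $d_\tau(b)=1=d_\tau(1_A)$ and $[b]$ is noncompact. But $1_A\precsim b$ would give $1_A\precsim(b-\delta)_+$ for some $\delta>0$, by compactness of $[1_A]$, and hence $1\le d_\tau((b-\delta)_+)=1-\delta$. In the present symmetric situation both classes are noncompact, so your two applications are valid once the citation is corrected.

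The same misplacement is why your reproof sketch stalls. A gap below $d_\tau(b)$ cannot produce the needed strict inequality. There is also no uniformity issue: strict comparison, as defined here, only asks for $d_\tau((a-\varepsilon)_+)<d_\tau(b)$ at each $\tau$ separately. The strictness comes from the noncompactness of $a$. Since $0$ is an accumulation point of $\sigma(a)$, you can choose a nonzero $g\in\Cstar(a)_+$ supported in $(0,\varepsilon)$. Then $g\perp(a-\varepsilon)_+$ and $(a-\varepsilon)_++g\in\Her(a)$, so
\[d_\tau((a-\varepsilon)_+)+d_\tau(g)\le d_\tau(a)=d_\tau(b),\]
with $d_\tau(g)>0$ by simplicity and $d_\tau(a)\le 1<\infty$. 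Hence $(a-\varepsilon)_+\precsim b$ for every $\varepsilon>0$, so $a\precsim b$, and $b\precsim a$ follows symmetrically.
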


We show the following generalization of the main theorem of \cite{TomsWan2025}, whose proof was kindly provided to us by Hao Wan and utilizes the notation developed in the same paper.

\begin{theorem}\label{thm:hereditary-prescribed-measures}
Let $A$ be a unital, simple, separable, and pure $\Cstar$-algebra with
$\QT(A)\neq\varnothing$, and let $a\in A_+\setminus\{0\}$. Suppose $h:\QT(A)\longrightarrow \mathcal P([0,1])_+$ is continuous and affine, where $\mathcal P([0,1])_+$ denotes the faithful regular Borel probability measures endowed with the weak topology, and
\[h(\tau)(\{0\})\ge \mu_{\tau,a}(\{0\})\qquad \text{for all } \tau\in\QT(A).\]
Then there exists $b\in \Her(a)_+$ such that $\mu_{\tau,b}=h(\tau)$ for every $\tau\in\QT(A)$ and $\sigma(b)=[0,1]$.
\end{theorem}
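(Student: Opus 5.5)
The plan is to follow the architecture of the Toms--Wan construction \cite{TomsWan2025}, with the unit replaced by the positive element $a$ and the full algebra replaced by $\Her(a)$.

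\textbf{Step 1: split off the zero mass.} Write $\mu_{\tau,a}(\{0\})=1-d_\tau(a)$. Decompose $h(\tau)=h(\tau)(\{0\})\delta_0+\nu_\tau$, where $\nu_\tau$ is a positive measure on $(0,1]$. Its total mass is $1-h(\tau)(\{0\})\le d_\tau(a)$. The affine structure and weak continuity of $h$ make $\tau\mapsto\nu_\tau(U)$ lower semicontinuous and affine for open $U\subseteq(0,1]$. So for each open $U$ we get a functional $\tau\mapsto\nu_\tau(U)$ in $\mathrm{LAff}(\QT(A))_{++}$, dominated by $\tau\mapsto d_\tau(a)$. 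The aim is to realize $\nu$ as the spectral measure of an element of $\Her(a)$ restricted to $(0,1]$. The remaining mass at $0$ is then automatic, since $b\in\Her(a)\subseteq A$ and $A$ is unital.

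\textbf{Step 2: Cuntz-semigroup realization.} Pureness gives strict comparison (almost unperforation) and, by almost divisibility together with results of Antoine--Perera--Thiel \cite{AntoinePereraThiel2018}, the following fact. Every functional in $\mathrm{LAff}(\QT(A))_{++}$ that is strictly below $\widehat{[a]}$, or is a suitable supremum of such, is the rank of an element of $\Cu(\Her(a))$. For noncompact targets one can also arrange $\le [a]$ when the rank is $\le d_\tau(a)$; this ``$\le$'' (rather than ``$<$'') case is a delicate point. Following \cite{TomsWan2025}, take a dyadic refinement of $(0,1]$ by open intervals. Build inductively a compatible system of Cuntz classes $x_U\in\Cu(\Her(a))$ with $\widehat{x_U}(\tau)=\nu_\tau(U)$. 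These should be additive on disjoint unions and monotone, i.e.\ a $\Cu$-morphism from the ``open interval'' semigroup, in the spirit of Ciuperca--Robert--Santiago representations of $\Cu$-morphisms $\mathrm{Lsc}((0,1])\to\Cu$. Faithfulness of $h(\tau)$ gives strict positivity of every $x_U$. That strict positivity is what produces full spectrum.

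\textbf{Step 3: lift to an element.} Use the fact that a $\Cu$-morphism $\Cu(C_0((0,1]))\to\Cu(\Her(a))$ lifts to a $*$-homomorphism $C_0((0,1])\to\Her(a)\otimes\K$. Alternatively, redo the Toms--Wan inductive construction of a positive contraction directly inside $\Her(a)$, building spectral projections-in-Cuntz-sense $(b-t)_+$ level by level with $d_\tau((b-t)_+)=\nu_\tau((t,1])$. This yields $b=\varphi(\mathrm{id})$. Separability is used to pick countable dense data and to stay inside $\Her(a)$ rather than $\Her(a)\otimes\K$. Landing in $\Her(a)$ itself uses $\sum_U x_U\le[a]$ and the fact that $\Her(a)$ is full and hereditary. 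One then conjugates by a partial isometry given by the standard Kirchberg--R\o rdam $\varepsilon$-cut-down trick.

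\textbf{Step 4: check the conclusion.} With $d_\tau(f(b))=\nu_\tau(\supp_o f)$ for all $f$, the Riesz representation theorem gives $\mu_{\tau,b}|_{(0,1]}=\nu_\tau$, and hence $\mu_{\tau,b}=h(\tau)$. Every open subinterval of $[0,1]$ has positive measure, so $\sigma(b)=[0,1]$.

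The main obstacle is the boundary case $h(\tau)(\{0\})=\mu_{\tau,a}(\{0\})$, where the ranks must equal $d_\tau(a)$ exactly rather than be strictly dominated by it. This should be handled by choosing $x_{(0,1]}=[a]$ itself and, when $[a]$ is compact, first replacing $a$ inside $\Her(a)$ using that every nonzero open interval is noncompact.
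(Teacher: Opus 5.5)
Your architecture matches the paper's. You build a $\Cu$-morphism out of $\Cu(C_0((0,1]))$ that sends a rank function $f$ to the noncompact class with rank $\tau\mapsto\int f\,dh(\tau)$, pull it back to $\Cu(\Her(a))$, and lift it to a $*$-homomorphism. However, the ingredient that every step rests on is missing: \emph{stable rank one}. The lifting ``fact'' in Step~3 is the Ciuperca--Elliott existence theorem \cite{CiupercaElliott2008,ElliottIm2024}, and that theorem is a statement about codomains of stable rank one. Step~2 also relies on several facts you attribute to pureness: every element of $\operatorname{LAff}(\QT(A))_{++}$ occurs as a rank; noncompact classes are determined by their rank functions, which you need for additivity and monotonicity of the $x_U$; and a noncompact class with rank at most $d_\tau(a)$ lies below $[a]$. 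In the paper these come from the decomposition $\Cu(A)\cong V(A)\sqcup\operatorname{LAff}(\QT(A))_{++}$ with its mixed order, obtained by running the argument of \cite{BrownToms2007} once stable rank one is available. The paper gets stable rank one from Lin's dichotomy \cite{Lin2025}: a unital, simple, separable, pure algebra is either purely infinite or has stable rank one, and $\QT(A)\neq\varnothing$ excludes the first alternative. Once this is in place, your ``main obstacle'' disappears. In the mixed order, a noncompact class whose rank function is pointwise at most $d_\tau(a)$ is automatically $\le[a]$, whether $[a]$ is compact or not; this is exactly how the paper gets $\beta([t])\le[a]$. Your proposed treatment (take $x_{(0,1]}=[a]$, after a replacement when $[a]$ is compact) only covers equality at every $\tau$. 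It does not cover the case where $h(\tau)(\{0\})=\mu_{\tau,a}(\{0\})$ holds for some quasitraces and fails for others. Likewise, no inductive dyadic construction is needed. The morphism is defined at once on all of $\Cu(C([0,1]))\cong\operatorname{Lsc}([0,1],\overline{\N})$, and the argument of \cite{TomsWan2025} checks that it preserves suprema and compact containment.

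Second, the step that moves your element into $\Her(a)$ fails as written. Suppose the lift is $b_0\in(\Her(a)\otimes\K)_+$ with $[b_0]\le[a]$. The Kirchberg--R{\o}rdam cut-down only places a copy of $(b_0-\varepsilon)_+$ in $\Her(a)$. That element has $d_\tau$ equal to $h(\tau)((\varepsilon,1])$, which is strictly less than $h(\tau)((0,1])$ because $h(\tau)$ is faithful. Its spectral measure on $(0,1]$ is a shifted copy of $h(\tau)|_{(\varepsilon,1]}$, so the exact equality $\mu_{\tau,b}=h(\tau)$ is lost. The paper avoids this by applying the existence theorem with codomain $B=\Her(a)$ itself. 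Here $B$ has stable rank one, $a$ is strictly positive in $B$, and $\beta([t])\le[a]$ holds in $\Cu(B)$, transported through the isomorphism $\Cu(B)\cong\Cu(A)$ of \cite{CowardElliottIvanescu2008}. This gives $\Phi\colon C_0((0,1])\to B$ with $\Cu(\Phi)=\beta$ directly, and one sets $b=\Phi(t)$. Alternatively, stable rank one lets you write $b_0=z^*z$ with $zz^*\in\Her(a)$ exactly (Ortega--R{\o}rdam--Thiel), and this preserves the quasitracial measures on $(0,1]$; a cut-down does not. With these two repairs, your Steps~1 and~4 go through as in the paper.
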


\begin{proof}
We can assume that $a$ is a contraction. Put $B=\Her(a)$. Since $\QT(A)\neq\varnothing$, $A$ is not purely infinite, hence has stable rank one by \cite[Theorem~1.4]{Lin2025}. The proof of \cite[Theorem 2.6]{BrownToms2007} now gives the required decomposition of the Cuntz semigroup:
\[\Cu(A)\cong V(A)\sqcup \operatorname{LAff}(\QT(A))_{++}.\]
We adapt \cite[Section~4]{TomsWan2025}. Recall that $\Cu(C([0,1]))\cong\operatorname{Lsc}([0,1],\overline\N)$ and that the function $\gamma_h:\Cu(C([0,1]))\to\Cu(A)$ was defined as follows: a compact element of constant rank $n$ is sent to $n[1_A]$, while a noncompact rank function $f$ is sent to the noncompact element with rank function
\[\tau\longmapsto\int_{[0,1]}f\,dh(\tau).\]
The latter function is lower semicontinuous and affine by the argument in
\cite[Lemma~4.1]{TomsWan2025}, and is strictly positive for $f\neq0$ because every $h(\tau)$ is faithful. Thus $\gamma_h$ is well defined. The proofs of \cite[Proposition~4.2]{TomsWan2025} and the auxiliary results it uses require only stable rank one, strict comparison, the decomposition above, its mixed order, and faithfulness of the measures, and therefore apply without change. Consequently, $\gamma_h$ is a Cu-morphism.

Next, we force the lift into $B$. Let $j:C_0((0,1])\hookrightarrow C([0,1])$ and $\iota:B\hookrightarrow A$ be the usual inclusions. As $B$ is full hereditary, $\Cu(\iota)$ is an isomorphism by \cite{CowardElliottIvanescu2008}, so we can define the $\Cu$-morphism $\beta=\Cu(\iota)^{-1}\circ\gamma_h\circ\Cu(j)$. If $t(s)=s$ is the canonical generator of $C_0((0,1])$, then $\operatorname{rank}(j(t))=\chi_{(0,1]}$ and for any $\tau\in\QT(A)$,
\[d_\tau\!\left(\Cu(\iota)(\beta([t]))\right)=\int_{[0,1]}\chi_{(0,1]}\,dh(\tau)=1-h(\tau)(\{0\})\leq1-\mu_{\tau,a}(\{0\})=d_\tau(a).\]
The class $\Cu(\iota)(\beta([t]))$ is noncompact since $\gamma_h$ preserves noncompactness by construction. Whether $[a]$ is compact or not, the order structure on the decomposition of $\Cu(A)$ gives $\Cu(\iota)(\beta([t]))\le[a]$, and therefore $\beta([t])\le[a]$ in $\Cu(B)$. Since stable rank one passes to hereditary subalgebras and $a$ is strictly positive in $B$, Theorem~2.15 of \cite{ElliottIm2024}, first proved in \cite{CiupercaElliott2008}, yields a $*$-homomorphism
$\Phi:C_0((0,1])\to B$ with $\Cu(\Phi)=\beta$. Put $b=\Phi(t)$.

The same argument as in the proof of \cite[Theorem~1.1]{TomsWan2025} gives that for every open $O\subset(0,1]$, $\mu_{\tau,b}(O)=h(\tau)(O)$. The equality of probability measures $\mu_{\tau,b}=h(\tau)$ follows. Since $\QT(A)\neq\varnothing$ and each $h(\tau)$ is faithful on $[0,1]$, no nonempty open subset of $[0,1]$ can be disjoint from $\sigma(b)$; therefore $\sigma(b)=[0,1]$.
\end{proof}

\section{Spectral Measures, Cantor Decomposition, and Rank Absorption}\label{sec:decomposition}

The first proposition of the section picks out a noncompact representative with full spectrum and quasitracial spectral measures that are diffuse away from zero. This will give us substantial flexibility in functional calculus while maintaining Cuntz equivalence in the sequel.

\begin{proposition}\label{prop:Lebesgue-spectrum}
Consider a unital, simple, separable, and pure $\Cstar$-algebra $A$ with $\QT(A)\neq\varnothing$, as well as $a\in A_+$ with noncompact Cuntz class. Then there exists $b\in\Her(a)_+$ such that $\sigma(b)=[0,1]$, $b\sim a$, and for any $\tau\in\QT(A)$, the restriction of $\mu_{\tau,b}$ to $(0,1]$ is absolutely continuous with respect to $\lambda|_{(0,1]}$.
\end{proposition}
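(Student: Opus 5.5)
We apply Theorem~\ref{thm:hereditary-prescribed-measures} with a carefully chosen continuous affine map $h$. Then we use Proposition~\ref{prop:rank-determines-class} to see that the resulting $b$ is Cuntz equivalent to $a$. For this we first record the ambient structure. Since $\QT(A)\neq\varnothing$, $A$ has stable rank one, as in the proof of Theorem~\ref{thm:hereditary-prescribed-measures}. Pureness gives almost unperforation, hence strict comparison. As $[a]$ is noncompact, $d_\tau(a)=\lim_n\tau(a^{1/n})$ is a lower semicontinuous affine function on $\QT(A)$ with $0<d_\tau(a)\le 1$. Simplicity and $a\neq0$ give strict positivity, and $d_\tau(a)=1-\mu_{\tau,a}(\{0\})$ after normalising $\|a\|\le1$.

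The natural candidate is
\[h(\tau)=\mu_{\tau,a}(\{0\})\,\delta_0+\nu_\tau,\]
where $\nu_\tau$ is a measure on $(0,1]$ with total mass $d_\tau(a)$, absolutely continuous with respect to $\lambda$ and of full support. Then $h(\tau)(\{0\})=\mu_{\tau,a}(\{0\})$ and $h(\tau)$ is faithful. The produced $b\in\Her(a)_+$ would satisfy $d_\tau(b)=1-h(\tau)(\{0\})=d_\tau(a)$. Its class would be noncompact, since $\sigma(b)=[0,1]$ has $0$ as an accumulation point (Proposition~\ref{prop:compact}). Proposition~\ref{prop:rank-determines-class} then yields $b\sim a$. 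Absolute continuity of $\mu_{\tau,b}|_{(0,1]}=\nu_\tau$ is immediate.

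\textbf{Main obstacle.} Theorem~\ref{thm:hereditary-prescribed-measures} requires $h$ to be \emph{continuous} in the weak topology. However, $\tau\mapsto d_\tau(a)$ is merely lower semicontinuous, so the mass of the atom at $0$ need not vary continuously. The plan is to spread the mass in a way that is continuous after weak convergence, exploiting that mass can escape toward $0$ without changing the weak limit.

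\begin{enumerate}[label=(\arabic*)]
\item Write $d_\tau(a)=\sup_n g_n(\tau)$ as an increasing supremum of continuous affine functions. Simplicity and compactness of $\QT(A)$ give $\inf_\tau d_\tau(a)>0$, so we may take $g_1>0$. Set $g_0=0$ and choose $g_n$ strictly increasing, for instance $g_n=\tau(f_n(a))$ for suitable cutoffs $f_n$, perturbed slightly if needed.
\item Put $\nu_\tau=\sum_n\bigl(g_n(\tau)-g_{n-1}(\tau)\bigr)\,\rho_n$, where $\rho_n$ is normalised Lebesgue measure on $(2^{-n},2^{-n+1}]$. Each increment is continuous affine and nonnegative, and together they sum to $d_\tau(a)$.
\item Define $h(\tau)=\bigl(1-d_\tau(a)\bigr)\delta_0+\nu_\tau$, which is affine.
\item Check weak continuity. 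For $f\in C([0,1])$,
\[\int f\,dh(\tau)=f(0)+\sum_n\bigl(g_n-g_{n-1}\bigr)(\tau)\Bigl(\int f\,d\rho_n-f(0)\Bigr).\]
The bracketed factor tends to $0$ uniformly in $n$. The tail of the series is therefore bounded by $\sup_{m>N}\bigl|\int f\,d\rho_m-f(0)\bigr|$, uniformly in $\tau$. Each partial sum is continuous, so the series converges uniformly and the limit is continuous.
\end{enumerate}

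Faithfulness holds if every increment is strictly positive, which is why we arrange $g_n$ strictly increasing. If that cannot be arranged directly, we instead add a small multiple $\varepsilon_n g_1$ to redistribute mass; this is the point I would check most carefully. The atom condition holds with equality. The rest then follows as in the second paragraph.
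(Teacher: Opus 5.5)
Your argument is correct and is essentially the paper's proof. It uses the same telescoping of $d_\tau(a)$ into continuous affine increments, the same $h(\tau)=(1-d_\tau(a))\delta_0+\sum_n\Delta_n(\tau)\nu_n$ with weak continuity from the uniform tail bound, and the same finish via Theorem~\ref{thm:hereditary-prescribed-measures}, Proposition~\ref{prop:compact}, and Proposition~\ref{prop:rank-determines-class}.

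The only difference is in the building blocks. The paper makes each $\nu_n$ faithful by mixing in $\varepsilon_n\lambda$, so one positive increment already makes $h(\tau)$ faithful. Your disjointly supported $\rho_n$ instead need every increment positive at every $\tau$. This does hold for $g_n=\tau(a^{1/n})$: noncompactness puts points of $\sigma(a)$ in $(0,1)$, so $a^{1/n}-a^{1/(n-1)}$ ($n\ge 2$) is a nonzero positive element. Quasitraces on simple $A$ are faithful and additive on commuting elements, so each increment is strictly positive and the perturbation you were worried about is unnecessary.
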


\begin{proof}
We assume without loss of generality that $a$ is a contraction. Let $\gamma_n,\gamma:\QT(A)\to[0,1]$ be given by $\gamma_n(\tau)=\tau(a^{1/n})$ for $n\ge 1$ and $\gamma(\tau)=d_\tau(a)$. Since $A$ is simple and $a\neq0$, every normalized quasitrace is faithful, and hence $\gamma(\tau)>0$. Also, the continuous affine functions $0=\gamma_0\le\gamma_1\le\gamma_2\le\cdots\le\gamma$ converge pointwise to $\gamma$. Set $\Delta_n=\gamma_n-\gamma_{n-1}$ for all $n\ge 1$. Each $\Delta_n$ is also continuous, affine, and nonnegative with
$\sum_{n}\Delta_n(\tau)=\gamma(\tau)$ for each $\tau\in\QT(A)$. For all $n\ge 1$, let $\varepsilon_n=2^{-n}$ and $\nu_n\in \mathcal{P}([0,1])_+$ be defined as follows:
\[\nu_n=(1-\varepsilon_n)\frac{\lambda|_{[0,\,\varepsilon_n]}}{\varepsilon_n} +\varepsilon_n\lambda.\]
Each $\nu_n$ is atomless and absolutely continuous with respect to $\lambda$. Also $\nu_n$ converges weakly to the Dirac measure centered at zero, $\delta_0$. Define an affine map $h:\QT(A)\to \mathcal P([0,1])$ given by
\[h(\tau)=(1-\gamma(\tau))\delta_0+\sum_{n=1}^\infty\Delta_n(\tau)\nu_n.\]
For each $\tau$, some $\Delta_n(\tau)$ is positive, and every $\nu_n$ is
faithful; hence $h(\tau)$ is faithful.

We claim that $h$ is continuous. Indeed, if $f\in C([0,1])$ and $c_n=\int f\,d\nu_n-f(0)$, then we observe that $c_n\to0$ and
\[\int f\,dh(\tau)=\left(1-\sum_{n=1}^\infty\Delta_n(\tau)\right)f(0)+\sum_{n=1}^\infty\Delta_n(\tau)\int f\,d\nu_n=f(0)+\sum_{n=1}^\infty\Delta_n(\tau)c_n.\]
The partial sums on the right are continuous affine functions of $\tau$, while $\sum_n\Delta_n(\tau)\leq1$ gives
\[\sup_{\tau\in\QT(A)}\left|\sum_{n>N}\Delta_n(\tau)c_n\right|\le \sup_{n>N}|c_n|.\]
Thus the series converges uniformly in $\tau$, proving the claim. Also,
\[h(\tau)(\{0\})=1-\gamma(\tau)=1-d_\tau(a)=\mu_{\tau,a}(\{0\}).\]
Theorem~\ref{thm:hereditary-prescribed-measures} now gives some $b\in\Her(a)_+$ such that $\sigma(b)=[0,1]$ and $\mu_{\tau,b}=h(\tau)$ for any $\tau\in\QT(A)$. From the definition of $h$, we observe that
\[\mu_{\tau,b}|_{(0,1]}=\sum_{n=1}^\infty\Delta_n(\tau)\nu_n|_{(0,1]}\ll\lambda|_{(0,1]}.\]
Since $b$ has full spectrum and $A$ has stable rank one by \cite[Theorem~1.4]{Lin2025}, Proposition~\ref{prop:compact} shows that $[b]$ is noncompact. Finally, Proposition~\ref{prop:rank-determines-class} and the computation below give $b\sim a$:
\[d_\tau(b)=\mu_{\tau,b}((0,1])=h(\tau)((0,1])=1-h(\tau)(\{0\})=d_\tau(a).\qedhere\]
\end{proof}

The next proposition allows us to replace an arbitrary noncompact representative by an orthogonal decomposition with controlled enlargements. These enlargements remain compatible with the decomposition and will be needed for unitary conjugations in the iterative absorption.

\begin{proposition}\label{prop:decomposition}
Suppose $A$ and $a\in A_+$ satisfy the hypotheses of Proposition~\ref{prop:Lebesgue-spectrum}. Then there exist sequences $(s_i)_{i\in\N}$ and $(\bar s_i)_{i\in\N}$ in $\Her(a)_+\setminus\{0\}$ along with $\varepsilon_i>0$ satisfying the following:
\begin{enumerate}[label=(\roman*)]
\item For every $i\in\N$, $s_i=(\bar s_i-3\varepsilon_i)_+$.
\item $\sum_i\|s_i\|< \infty$, and if $i<j$, then $s_is_j=0=s_i\bar s_j$.
\item For all $i\in\N$, $\alpha\in[0,\|\bar s_i\|)$, and $\tau\in\QT(A)$, the restriction of $\mu_{\tau,(\bar s_i-\alpha)_+}$ to its positive spectrum is atomless.
\item For each $n$, the series $r_n:=\sum_{i=n}^\infty\bar s_i$ converges in $\Her(a)_+$ with noncompact Cuntz class bounded above by $[a]$.
\item There is a path of Cuntz-equivalent elements in $\Her(a)_+$ from $a$ to $\sum_{i} s_i$.
\end{enumerate}
\end{proposition}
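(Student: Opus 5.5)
First apply Proposition~\ref{prop:Lebesgue-spectrum} to obtain $b\in\Her(a)_+$ with $\sigma(b)=[0,1]$, $b\sim a$, and every $\mu_{\tau,b}|_{(0,1]}$ absolutely continuous with respect to $\lambda$. Every later element will be of the form $f(b)$, so all of them lie in $\Her(b)\subseteq\Her(a)$. For such elements we have $d_\tau(f(b))=\mu_{\tau,b}(\supp_o f)$. Moreover, the spectral measure of $f(b)$ is the push-forward of $\mu_{\tau,b}$ under $f$.

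\textbf{The Cantor decomposition.} Let $(I_i)_{i\in\N}$ enumerate the open complementary intervals of the ternary Cantor set $\mathcal{C}$ in $(0,1)$, ordered by decreasing length. Then $\lambda(\bigcup_i I_i)=1$, and the closures $\overline{I_i}$ are pairwise disjoint. Write $I_i=(c_i,d_i)$ and choose $\varepsilon_i>0$ with $3\varepsilon_i<(d_i-c_i)/4$. Let $\bar g_i$ be a tent function supported exactly on $I_i$ with height $h_i$, strictly monotone on each half, so that $(\bar g_i-\alpha)_+$ has open support an open subinterval of $I_i$ for each $\alpha<h_i$. Set $g_i=(\bar g_i-3\varepsilon_i)_+$, and define $s_i=g_i(b)$ and $\bar s_i=\bar g_i(b)$.

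\textbf{Checking (i)--(iv).} Item (i) follows from functional calculus. Item (ii) holds because the supports of the functions are disjoint, and summability follows by choosing $h_i=2^{-i}$, which forces $\varepsilon_i<2^{-i}/3$ as well. For (iii), the function $(\bar g_i-\alpha)_+$ is piecewise strictly monotone and continuous on its open support. Hence the push-forward of $\mu_{\tau,b}$, which is atomless on $(0,1]$ by absolute continuity, has no atoms on $(0,\infty)$. For (iv), the series $\sum_{i\ge n}\bar s_i$ converges in norm, and its sum is $F_n(b)$ with $\supp_o F_n=\bigcup_{i\ge n}I_i$. Thus $d_\tau(r_n)\le d_\tau(b)=d_\tau(a)$, and $r_n\in\Her(b)$ gives $[r_n]\le[a]$. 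For noncompactness, note that $\sigma(r_n)\supseteq F_n([0,1])$ contains values accumulating at $0$, since $F_n$ is continuous, positive on $I_i$ and zero at its endpoints. So zero is not isolated, and Proposition~\ref{prop:compact} applies, using stable rank one from \cite[Theorem~1.4]{Lin2025}.

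\textbf{Item (v), the main obstacle.} Put $s=\sum_i s_i=G(b)$, where $\supp_o G=\bigcup_i J_i$ and $J_i\subset I_i$ is the open support of $g_i$. Then $s\in\Her(b)$, so $s\precsim b$. The difficulty is that $\lambda(\bigcup_i J_i)<1$, so $d_\tau(s)$ could be strictly smaller than $d_\tau(b)$. I would fix this by choosing the $\varepsilon_i$ more carefully and also choosing a different base element.

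First, take $b'=\bigl(\sum_i\bar s_i\bigr)=\bar G(b)$. Since $\supp_o\bar G=\bigcup_i I_i$ has full Lebesgue measure in $(0,1)$, absolute continuity gives
\[d_\tau(b')=\mu_{\tau,b}\Bigl(\bigcup_i I_i\Bigr)=\mu_{\tau,b}((0,1])=d_\tau(b).\]
The element $b'$ is noncompact as above, so Proposition~\ref{prop:rank-determines-class} gives $b'\sim b$. Second, $s$ itself need not be equivalent to $b'$. To reach $s$, I would shrink the cutoffs continuously. Consider $G_t=\sum_i(\bar g_i-3t\varepsilon_i)_+$ for $t\in[0,1]$. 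Each $G_t(b)$ lies in $\Her(b')$. So if each $G_t(b)\sim b'$, the whole path consists of Cuntz-equivalent elements. To guarantee this, I redefine the $\bar g_i$ so that every truncation keeps full measure. Replace each interval $I_i$ by a finite union of tents placed on a sequence of subintervals exhausting $I_i$, arranged so that $(\bar g_i-\beta)_+$ still has open support of full measure in $I_i$ for every $\beta\le 3\varepsilon_i$.

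A cleaner alternative is to let $\bar g_i$ be positive on all of $I_i$ with $\bar g_i>3\varepsilon_i$ except on a null set. This is impossible for continuous functions vanishing at the endpoints, so the exhaustion must instead be done by splitting $I_i$ itself into countably many complementary intervals of a fat-free Cantor-type set and reindexing.

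With the decomposition indexed by this refined countable family, $(\bar g_i-\beta)_+$ has open support all but a null set of its interval. Each $G_t(b)$ is then noncompact with rank $d_\tau(b)$ and hence equivalent to $b$. The path $b\to b'$ is given by Lemma~\ref{lem:segment}, noting that $b'\sim b$ lies in $\Her(b)$. The path $a\to b$ is also given by Lemma~\ref{lem:segment}. Concatenating these paths with $t\mapsto G_t(b)$ yields (v).
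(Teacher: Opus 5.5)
Your construction cannot satisfy (v), and the proposed repair does not fix it. Each $\bar g_i$ is continuous, vanishes at the endpoints of $I_i$, and exceeds $3\varepsilon_i$ somewhere, and the $I_i$ are pairwise disjoint. So $I_i\setminus\supp_o g_i\supseteq\{0<\bar g_i<3\varepsilon_i\}$ is a nonempty open set. Since $A$ is simple, every $\tau\in\QT(A)$ is faithful, and with $\sigma(b)=[0,1]$ the measure $\mu_{\tau,b}$ charges every nonempty open subset of $[0,1]$. Hence $d_\tau(\sum_i s_i)=\mu_{\tau,b}(\bigcup_i J_i)<\mu_{\tau,b}((0,1])=d_\tau(a)$ for every $\tau$. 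Therefore $\sum_i s_i\not\sim a$, and no path of Cuntz-equivalent elements from $a$ to $\sum_i s_i$ can exist, however you interpolate. For the same reason $G_t(b)\not\sim b'$ for every $t>0$, so the path $t\mapsto G_t(b)$ leaves the class immediately. Your refinement runs into exactly the obstruction you note one sentence earlier. Splitting $I_i$ into the complementary intervals of a null Cantor set and putting a continuous tent on each just reproduces, on every new interval, a nonempty open set where $0<\bar g<3\varepsilon$. So the claim that $(\bar g_i-\beta)_+$ has open support of full measure in its interval is false for any such family with mutually orthogonal $\bar s_i$.

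The missing idea is to reverse which family sits exactly on the Cantor gaps. Take the $s_i$ to be tents supported \emph{exactly} on the complementary intervals of $\mathcal C$. Then $\supp_o(\sum_i s_i)=[0,1]\setminus\mathcal C$, which has full $\mu_{\tau,b}$-measure in $(0,1]$ by absolute continuity, so $d_\tau(\sum_i s_i)=d_\tau(a)$. The sum is noncompact, so Proposition~\ref{prop:rank-determines-class} gives $\sum_i s_i\sim a$ and Lemma~\ref{lem:segment} gives the path directly, with no intermediate $b'$. The $\bar s_i$ are then tents on slightly \emph{larger} intervals that spill past the endpoints into the Cantor set. This is permitted because (ii) only asks for the one-sided orthogonality $s_i\bar s_j=0$ when $i<j$; nothing requires $\bar s_i\perp\bar s_j$ or $\bar s_i\perp s_j$ for $i<j$. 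The paper groups intervals by construction stage. Stage-$j$ intervals have length $3^{-j}$ and endpoints in $3^{-j}\mathbb{Z}$, and intervals from different stages have disjoint closures. So every stage-$j$ interval lies at distance at least $3^{-j}$ from all earlier-stage intervals. Enlarging the stage-$j$ tents by $3^{-j}/4$ on each side, with $\varepsilon_j=3^{-j}/12$, keeps $\bar s_j$ orthogonal to every earlier $s_i$, and $(\bar s_j-3\varepsilon_j)_+$ recovers exactly $s_j$. With your one-interval-per-index ordering the same fix works: enlarge $I_j$ by less than its positive distance to the finitely many earlier intervals.
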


\begin{proof}
Apply Proposition~\ref{prop:Lebesgue-spectrum} to get $b\in\Her(a)_+$ satisfying its conclusions. Via functional calculus we identify $\Cstar(b)$ with $C_0((0,1])$. Let $\mathcal C$ denote the ternary Cantor set. At step $i\ge 1$ in the construction of $\mathcal{C}$, set $\rho_i={1}/(2\cdot3^i)$ and write the union of the removed open intervals as
\[O_i=\bigsqcup_{j=1}^{2^{i-1}}(t_j^i-\rho_i,t_j^i+\rho_i).\]
Enumerate the centers in increasing order: $3\rho_i=t_1^i<t_2^i<\cdots<t_{2^{i-1}}^i=1-3\rho_i$. Also, set $\eta_i={3\rho_i}/{2}$ and define piecewise-linear maps $f_i,\bar f_i:[0,1]\to [0,1]$ by
\[
f_i(t)=\sum_{j=1}^{2^{i-1}}(\rho_i-|t-t_j^i|)_+
\qquad \text{and } \qquad \bar f_i(t)=\sum_{j=1}^{2^{i-1}}(\eta_i-|t-t_j^i|)_+.
\]
Finally, put $\varepsilon_i={\rho_i}/{6}$, $s_i=f_i(b)$, and $\bar s_i=\bar f_i(b)$. Note that $O_i$ is the open support of $f_i$, and both $f_i$ and $\bar f_i$ are clearly nonzero. We now verify the conclusions in order:

\begin{enumerate}[label=\textup{(\roman*)},leftmargin=*]
\item We first check that the summands of $\bar f_i$ have disjoint supports. Every endpoint of a component of $O_i$ is an integral multiple of $3^{-i}=2\rho_i$. Hence two distinct components of $O_i$ are separated by at least $2\rho_i$. Passing from a component of $O_i$ to the corresponding support component of $\bar f_i$ enlarges it on each side by $\eta_i-\rho_i={\rho_i}/{2}=3\varepsilon_i$. Thus distinct support components of $\bar f_i$ remain separated by at least $2\rho_i-6\varepsilon_i=\rho_i>0$. At each point, therefore, at most one summand in the definition of $\bar f_i$ is nonzero. Thus, we have $f_i=(\bar f_i-3\varepsilon_i)_+$ and $s_i=(\bar s_i-3\varepsilon_i)_+$.

\item The $s_i$ are mutually orthogonal as the $O_i$ are pairwise disjoint. Also, $\sum_i\|s_i\|=\sum_i\rho_i<1$. Fix $i<j$. Every endpoint of a component of $O_i$ or $O_j$ is an integral multiple of $3^{-j}$. Since any intervals deleted at stage $i$ and stage $j$ have disjoint closures, the distance between any such pair of components is also a positive integral multiple of $3^{-j}=2\rho_j$. The support of $\bar f_j$ is obtained from $O_j$ by enlarging each component on each side by only $3\varepsilon_j={\rho_j}/{2}$ as discussed in the previous part. It follows that the support of $f_i$ remains disjoint from the support of $\bar f_j$, and therefore $s_i\bar s_j=0$.

\item Fix $i\in\N$, $\tau\in\QT(A)$, $y>0$, and $0\le\alpha<\eta_i$. Put $g=(\bar f_i-\alpha)_+$. The level set $g^{-1}(\{y\})\subset(0,1]$ is finite, since $g$ is a finite sum of disjoint piecewise-linear tents. The spectral measure of $g(b)=(\bar s_i-\alpha)_+$ is the push-forward of $\mu_{\tau,b}$ under $g$, the latter having an atomless restriction to the positive spectrum by Proposition~\ref{prop:Lebesgue-spectrum}. Therefore,
\[\mu_{\tau,(\bar s_i-\alpha)_+}(\{y\})=\mu_{\tau,b}(g^{-1}(\{y\}))=0.\]

\item
Since $\|\bar s_i\|=\eta_i$ is summable, each series $r_n=\sum_{i=n}^\infty\bar s_i$ converges in norm to an element of $\Cstar(b)\subseteq\Her(a)$; thus $r_n\precsim a$. Since the scalar function corresponding to $r_n$ vanishes at $t=1$ and is positive at, say, $t=t_1^n$, the intermediate value theorem ensures that $\sigma(r_n)$ contains an interval starting at zero. Thus Proposition~\ref{prop:compact} shows that $[r_n]$ is noncompact.

\item
Set $s=\sum_{i} s_i$. The open support of the scalar function defining $s$ is $[0,1]\setminus\mathcal C$. Since the Cantor set has Lebesgue measure zero, Proposition~\ref{prop:Lebesgue-spectrum} gives $\mu_{\tau,b}(\mathcal C\setminus\{0\})=0$. So, we have for every $\tau\in\QT(A)$,
\[d_\tau(s)=\mu_{\tau,b}([0,1]\setminus\mathcal C)=\mu_{\tau,b}((0,1])=d_\tau(b)=d_\tau(a).\]
The intermediate value theorem and Proposition~\ref{prop:compact} again show that $[s]$ is noncompact. Proposition~\ref{prop:rank-determines-class} gives $s\sim a$, and Lemma~\ref{lem:segment} supplies the required path. \qedhere
\end{enumerate}
\end{proof}

We finally turn to the absorption mechanism which moves an orthogonal summand into a prescribed representative while preserving the total rank available for the remaining tail. The atomlessness condition is precisely what makes the small loss in the rank estimate tend to zero.

\begin{lemma}\label{lem:absorption}
Let $A$ be a unital and simple $\Cstar$-algebra with strict comparison and stable rank one. Also, let $b,\bar s,r\in A_+$, $x,y\in\Her(b)_+$, and $\varepsilon>0$ satisfy the following conditions:
\begin{enumerate}[label=(\roman*)]
\item For every $\tau\in\QT(A)$ and every $0\le\rho<\|\bar s\|$, the restriction of $\mu_{\tau,(\bar s-\rho)_+}$ to the positive spectrum of $(\bar s-\rho)_+$ is atomless.
\item The class $[r]$ is noncompact.
\item $x\perp y$, $x\perp\bar s$, $x\perp r$, and $s\perp r$, where $s=(\bar s-3\varepsilon)_+\ne0$.
\item For every $\tau\in\QT(A)$, $d_\tau(x)+d_\tau(y)=d_\tau(b)$.
\item $\bar s+r\precsim y$.
\end{enumerate}
Then there exist $x',y'\in\Her(b)_+$ and a norm-continuous path $u:[0,1]\to\U(A)$ such that
\begin{enumerate}[label=(\alph*)]
\item $u_0=1_A$ and conjugation by $u_t$ fixes $\Her(x)$ pointwise for all $t$.
\item $x\perp u_1su_1^*$ and $x+u_1su_1^*=x'$.
\item $x'\perp y'$.
\item For every $\tau\in\QT(A)$, $d_\tau(x')+d_\tau(y')=d_\tau(b)$.
\item $r\precsim y'$.
\end{enumerate}
\end{lemma}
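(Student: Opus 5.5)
The plan is to conjugate $s$ into $\Her(y)$ by a unitary path that never touches $\Her(x)$, and then choose $y'$ inside $\Her(y)$, orthogonal to the moved copy of $s$, with exactly the rank $d_\tau(y)-d_\tau(s)$. Atomlessness in (i) is what makes this rank function lower semicontinuous, and hence realisable.

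\textbf{Step 1: a corner avoiding $x$.} By (iii), $\bar s$, $r$ and $y$ are all orthogonal to $x$. So $e:=\bar s+r+y$ generates a hereditary subalgebra $D=\Her(e)$ with $D\perp\Her(x)$, and $D$ has stable rank one. Every unitary of the form $u=1_A+d$ with $d\in D$ fixes $\Her(x)$ pointwise: for $z\in\Her(x)$ we have $dz=zd=0$, hence $uzu^*=z$. The same holds for a continuous path of such unitaries starting at $1_A$, which gives (a).

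\textbf{Step 2: moving $s$ (main obstacle).} Since $s\le\bar s\precsim y$ and $s=(\bar s-3\varepsilon)_+$ lies strictly below $(\bar s-2\varepsilon)_+$, stable rank one gives a unitary $w\in\widetilde D$ with $w(\bar s-2\varepsilon)_+w^*\in\Her(y)$.

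The difficulty is that $w$ need not lie in the connected component of $1$. To get around this, I would first extract from $s\perp r$, $\bar s+r\precsim y$ and noncompactness of $[r]$ a positive element $c\in\Her(y)$ with the following properties:
\begin{itemize}
\item $c\sim(\bar s-2\varepsilon)_+$;
\item $c$ is orthogonal to a cutoff $g(\bar s)$, where $g=1$ on $[2\varepsilon,\infty)$;
\item $\Her(y)\cap\{c\}^\perp$ still contains an element dominating $r$.
\end{itemize}
Given orthogonal supports, stable rank one yields an element $v\in D$ with $v^*v$ close to $g(\bar s)$ and $vv^*\in\Her(c)$. The $2\times2$ rotation
\[
\cos(\tfrac{\pi t}{2})\text{ on the supports}+\sin(\tfrac{\pi t}{2})(v-v^*)
\]
then gives an explicit unitary path $u_t\in 1_A+D$ with $u_0=1_A$ and $u_1su_1^*\in\Her(c)\subseteq\Her(y)$. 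The precise perturbation bookkeeping, using the gap between $2\varepsilon$ and $3\varepsilon$ in the functional calculus, is where I expect most of the work.

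Put $x'=x+u_1su_1^*$. Orthogonality of $x$ and $u_1su_1^*$ follows from $x\perp D$, which gives (b).

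\textbf{Step 3: choosing $y'$.} Let $\tilde y$ be a strictly positive element of
\[
\Her(y)\cap\{u_1 g(\bar s)u_1^*\}^\perp .
\]
Set $\phi(\tau)=d_\tau(y)-d_\tau(s)$.
\begin{itemize}
\item By (i) with $\rho=3\varepsilon$, we have $d_\tau(s)=\mu_{\tau,\bar s}((3\varepsilon,\infty))=\mu_{\tau,\bar s}([3\varepsilon,\infty))$. This is upper semicontinuous and affine in $\tau$, so $\phi$ is lower semicontinuous and affine.
\item Since $s\perp r$ and $s+r\le\bar s+r\precsim y$, we get $\phi\ge d_\tau(r)$.
\item We also have $\phi\le d_\tau(\tilde y)$.
\end{itemize}
Using the decomposition $\Cu(A)\cong V(A)\sqcup\operatorname{LAff}(\QT(A))_{++}$ (which holds in our pure setting, as in the proof of Theorem~\ref{thm:hereditary-prescribed-measures}) together with stable rank one, I would realise $\phi$ by a noncompact class below $[\tilde y]$. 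I would then take $y'\in\Her(\tilde y)$ representing that class. This gives $y'\in\Her(b)$ and $y'\perp u_1su_1^*$; since also $y'\perp x$, we get (c).

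For (d), compute
\[
d_\tau(x')+d_\tau(y')=d_\tau(x)+d_\tau(s)+d_\tau(y)-d_\tau(s)=d_\tau(b),
\]
using (iv).

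For (e), the estimate $d_\tau(r)\le d_\tau(y')$ is in general not strict, so plain strict comparison does not apply. Instead, since both $[r]$ and $[y']$ are noncompact, the order on $\operatorname{LAff}(\QT(A))_{++}$ (equivalently, \cite[Proposition 5.9]{AraPereraToms2011} as used in Proposition~\ref{prop:rank-determines-class}) gives $r\precsim y'$. If exact domination is delicate, I would slightly enlarge $\phi$ by using the slack $d_\tau((\bar s-2\varepsilon)_+)-d_\tau(s)$ and compensate inside $x'$, again using atomlessness to keep all rank functions continuous.
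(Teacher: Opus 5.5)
Step 1 and your derivation of (e) from an exact rank identity are fine. There are, however, two genuine gaps, both at the places you flagged as delicate.

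\textbf{Step 2: the rotation needs room that the hypotheses do not provide.} Take $x=0$, $\bar s\perp r$ and $y=b=\bar s+r$, with $d_\tau(r)$ small and $\mu_{\tau,\bar s}$ atomless and concentrated on $(2\varepsilon,\|\bar s\|]$. All hypotheses (i)--(v) hold. Here $g(\bar s)\in\Her(y)$, so any $c\in\Her(y)$ orthogonal to $g(\bar s)$ satisfies $d_\tau(c)+\mu_{\tau,\bar s}([2\varepsilon,\infty))\le d_\tau(\bar s)+d_\tau(r)$. If also $c\sim(\bar s-2\varepsilon)_+$, this forces $\mu_{\tau,\bar s}((2\varepsilon,\infty))\le\mu_{\tau,\bar s}((0,2\varepsilon))+d_\tau(r)$, which fails. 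So your $c$ need not exist, and the $K_1$-obstruction you correctly identified for $w$ remains unresolved.

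The missing idea is to keep the arbitrary $w\in\U(B^\dagger)$, with $B=\Her(\bar s+r+y)$, and correct its $K_1$-class inside the target corner $D_0=\Her_B((y-\delta)_+)$.
\begin{enumerate}
\item $D_0$ is nonzero and hereditary in the simple algebra $B$, hence full. So $K_1(D_0)\to K_1(B)$ is an isomorphism by Brown's theorem.
\item Pick $v\in\U(D_0^\dagger)$ with class $-[w]_1$. Then $vw\in\U_0(B^\dagger)$ by $K_1$-injectivity in stable rank one.
\item Since $v$ normalizes $D_0$, the unitary $vw$ still conjugates $(\bar s-2\varepsilon)_+$ into $D_0$.
\end{enumerate}

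\textbf{Step 3: the rank estimate for $\tilde y$ fails.} Suppose, as your rotation is designed to achieve, that $u_1g(\bar s)u_1^*\in\Her(c)\subseteq\Her(y)$. Then $\tilde y$ is orthogonal inside $\Her(y)$ to an element of rank at least $\mu_{\tau,\bar s}([2\varepsilon,\infty))$. Hence $d_\tau(\tilde y)\le\phi(\tau)-\mu_{\tau,\bar s}([2\varepsilon,3\varepsilon])$, which contradicts your claim $\phi\le d_\tau(\tilde y)$ whenever $\bar s$ has spectral mass in $[2\varepsilon,3\varepsilon]$. Since (b) pins $x'=x+u_1su_1^*$ and (d) is an exact equality, there is no room to ``compensate inside $x'$''.

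Separately, realising $\phi$ by a Cuntz class uses surjectivity of the rank map onto $\operatorname{LAff}(\QT(A))_{++}$, which you import from pureness. The lemma assumes only strict comparison and stable rank one.

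The paper avoids both problems by cutting at a single spectral level.
\begin{enumerate}
\item Set $c=u_1(\bar s-2\varepsilon)_+u_1^*$, so that $(c-\varepsilon)_+=u_1su_1^*$, and let $e=g_\delta(y)$, which acts as a unit on $D_0$.
\item Take $y'=e(1-g_\varepsilon(c))e$.
\item Prove $d_\tau((c-\varepsilon)_+)+d_\tau(y')=d_\tau(y)$ directly. The only possible loss is $\mu_{\tau,c}(\{\varepsilon\})$, which vanishes by (i) with $\rho=2\varepsilon$.
\end{enumerate}
That is the actual role of atomlessness, not semicontinuity of $\phi$.
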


\begin{proof}
Let $B=\Her(\bar s+r+y)$. This hereditary subalgebra is nonzero and simple and has stable rank one. Since $x$ is orthogonal to $\bar s$, $r$, and $y$, we have $xB=Bx=0$. Put $B^\dagger=B+\C1_A\subseteq A$. $B^\dagger$ also has stable rank one, and $K_1(B^\dagger)$ identifies canonically with $K_1(B)$.

Since $\bar s\precsim\bar s+r\precsim y$ and hereditary inclusions induce order-embedding Cuntz morphisms \cite{CowardElliottIvanescu2008}, $\bar s\precsim y$ holds inside $B$. Proposition~2.17 of \cite{AraPereraToms2011} gives $\delta>0$ such that $(\bar s-\varepsilon)_+\precsim(y-\delta)_+$. Set $D=\Her_B((y-\delta)_+)\subseteq \Her(b)$ and $D^\dagger=D+\C1_A\subseteq B^\dagger$. Another application of the proposition gives a unitary $w\in\U(B^\dagger)$ such that $w(\bar s-2\varepsilon)_+w^*\in D$. This element is nonzero, so $D$ is a full hereditary subalgebra of $B$ and also has stable rank one. The inclusion $D\hookrightarrow B$ now induces an isomorphism on $K_1$ (see \cite{Brown1977}). Under the identifications $\U(D^\dagger)/\U_0(D^\dagger)\cong K_1(D)$ and $\U(B^\dagger)/\U_0(B^\dagger)\cong K_1(B)$, the map on $K_1$ is induced by the inclusion $D^\dagger\hookrightarrow B^\dagger$. Therefore, we can choose $v\in\U(D^\dagger)$ whose $K_1$-class maps to $-[w]_1$. We then have $[vw]_1=0$ in $K_1(B^\dagger)$ and hence $vw\in\U_0(B^\dagger)$, the path component of $1_A$ in $\U(B^\dagger)$. Choose a norm-continuous path $(u_t)_{t\in[0,1]}$ in $\U(B^\dagger)$ such that $u_0=1_A$ and $u_1=vw$.

Set $c=u_1(\bar s-2\varepsilon)_+u_1^*$. Because $v\in D^\dagger$ and $w(\bar s-2\varepsilon)_+w^*\in D$, one has $c\in D$. Given any $\gamma>0$, define $g_\gamma(t)=\min\{1,t/\gamma\}$ and $e=g_\delta(y)$. Since $e$ is equal to $1$ on the support of $(y-\delta)_+$, it acts as a unit on $D$. In particular, $ec=ce=c$. Define $x',y'\in \Her(b)_+$ as $x'=x+(c-\varepsilon)_+$ and $y'=e(1-g_\varepsilon(c))e$.

We now verify each of the conclusions:

\begin{enumerate}[label=\textup{(\alph*)},leftmargin=*]
\item
If $B=A$, then $xB=0$ forces $x=0$. Otherwise, write $u_t=u'+\eta1_A$ for some $u'\in B$ and $|\eta|=1$; if $z\in\Her(x)$, then $u'z=zu'=0$, so $u_tzu_t^*=z$.

\item
The first part follows from $xB=0$, the second from $(c-\varepsilon)_+=u_1(\bar s-3\varepsilon)_+u_1^*=u_1su_1^*$.

\item
We have $xe=0$, and $(c-\varepsilon)_+(1-g_\varepsilon(c))=0$. The orthogonality of $x'$ and $y'$ now follows from the fact that $e$ commutes with $c$, hence with $(c-\varepsilon)_+$.

\item
Define maps $F(t)=(t-\varepsilon)_++1-g_\varepsilon(t)$ and $h_n(t)=\min\{1,n^2\left({t}/{\varepsilon}-1\right)^2\}$ for $n\in\N$ and $t\in[0,\|c\|]$. The function $F$ vanishes only at $t=\varepsilon$. That $e$ acts as a unit on $D$ implies that $eF(c)e=(c-\varepsilon)_++y'$.

Fix any $\tau\in\QT(A)$. We claim that $d_\tau(e)= d_\tau(eF(c)e)$. Indeed, for each $n$, the quotient $h_n/F$, defined away from $\varepsilon$, extends continuously across $\varepsilon$ by taking the value $0$ there. Thus there is $k_n(c)\in \Cstar(c,1_A)_+$ such that $h_n(c)=k_n(c)F(c)$, and hence
\[eh_n(c)e=k_n(c)^{1/2}eF(c)e\,k_n(c)^{1/2}\precsim eF(c)e.\]
Since $e\sim e^2=eh_n(c)e+e(1-h_n(c))e$, subadditivity of dimension functions gives
\begingroup
\reqnoslocal
\renewcommand{\theHequation}{ineqstar}
\begin{equation}
d_\tau(e)
\le d_\tau(eh_n(c)e)+d_\tau(e(1-h_n(c))e)
\le d_\tau(eF(c)e)+d_\tau(e(1-h_n(c))e).
\tag{*}\label{eq:inequality}
\end{equation}
\endgroup
Since $e$ is a contraction, we also get that
\[e(1-h_n(c))e\sim(1-h_n(c))^{1/2}e^2(1-h_n(c))^{1/2}\leq1-h_n(c).\]
The element $c$ is unitarily equivalent to $(\bar s-2\varepsilon)_+$, so
hypothesis (i) implies that $\mu_{\tau,c}$ also has no atoms in its positive
spectrum. Moreover, the open support of $1-h_n$ is $I_n=(\varepsilon(1-1/n), \varepsilon(1+1/n))$. Note that $I_n$ decreases to $\{\varepsilon\}$ and never contains zero. Thus
\[\lim_{n\to\infty}d_\tau(e(1-h_n(c))e)\le\lim_{n\to\infty}d_\tau(1-h_n(c))=\lim_{n\to\infty}\mu_{\tau,c}(I_n)=\mu_{\tau,c}(\{\varepsilon\})=0.\]
Therefore, letting $n\to\infty$ in \eqref{eq:inequality} gives $d_\tau(e)\le d_\tau(eF(c)e)$. The reverse inequality holds because $eF(c)e\in\Her(e)$, hence $eF(c)e\precsim e$, proving the claim. Hence
\[d_\tau((c-\varepsilon)_+)+d_\tau(y')=d_\tau((c-\varepsilon)_++y')=d_\tau(e)=d_\tau(y),\]
where the last equality follows because $e=g_\delta(y)$ has the same open support as $y$. Adding $d_\tau(x)$ to both sides of the equation above yields the desired equality.

\item
We get $r\precsim y'$ from \cite[Proposition~5.9]{AraPereraToms2011} and the inequality below for any $\tau\in\QT(A)$:
\[d_\tau(r)=d_\tau(s+r)-d_\tau(s)\le d_\tau(\bar s+r)-d_\tau(u_1su_1^*)\le d_\tau(y)-d_\tau((c-\varepsilon)_+)=d_\tau(y').\qedhere\]
\end{enumerate}
\end{proof}

\section{Proof of the main theorem}\label{sec:mainproof}

We are now ready to prove our main theorem, restated here:
\begin{customthm}{I}
Let $A$ be a unital, simple, separable, and pure $\Cstar$-algebra. For any $a\in A_+$, the subspace $\{b\in A_+:b\sim a\}$ is path-connected.    
\end{customthm}

\begin{proof}
Fix $a,b\in A_+$ with $a\sim b$. If $a=0$, then there is nothing to prove. Suppose first that $A$ is purely infinite. In this case, any pair of nonzero positive elements is Cuntz equivalent. The straight-line homotopy $f(t)=(1-t)a+tb$ gives a nonzero positive element for each $t\in[0,1]$, hence is a path from $a$ to $b$ of Cuntz-equivalent elements.

By the dichotomy in \cite[Theorem~1.4]{Lin2025}, we assume henceforth that $A$ has stable rank one. Then stable finiteness and the Blackadar-Handelman theorem \cite{BlackadarHandelman1982} give $\QT(A)\neq\varnothing$. Moreover, pureness also implies strict comparison. Suppose first that the common Cuntz class is compact. Proposition~\ref{prop:compact} and Lemma~\ref{lem:spectral-gap-homotopy} give constant-class paths from $a$ and $b$ to their support projections $p$ and $q$, respectively. Since $p\sim q$, the projections are Murray-von Neumann equivalent. Stable rank one ensures cancellation of projections, so $1-p$ and $1-q$ are also Murray-von Neumann equivalent; consequently $p$ and $q$ are unitarily equivalent. They can be joined by a path of projections by \cite[Theorem~1]{Brown1995}. Concatenating these three paths joins $a$ to $b$ inside their common Cuntz class.

Assume from now on that $[a]=[b]$ is noncompact. We may also assume without loss of generality that $a=\sum_{i=1}^\infty s_i$, with $s_i$, $\bar s_i$, $\varepsilon_i$, and $r_n$ satisfying all conclusions of Proposition~\ref{prop:decomposition}. Pick a sequence $(t_k)_{k=0}^\infty$ such that $t_0=0$ and $t_k\nearrow1$. We first inductively construct a continuous path $v:[0,1)\longrightarrow\U(A)$, and, for every $k\geq1$, orthogonal elements $x_k,y_k\in\Her(b)_+$ such that
\begin{align}
&d_\tau(x_k)+d_\tau(y_k)=d_\tau(b) \text{ for all } \tau\in\QT(A),\label{eq:ind-rank}\\
&r_{k+1}\precsim y_k,\label{eq:ind-tail}\\
&x_k=v_{t_k}\left(\sum_{i=1}^k s_i\right)v_{t_k}^*=\sum_{i=1}^kv_{t_i}s_iv_{t_i}^*,\text{ and }\label{eq:ind-partial}\\
&v_t\left(\sum_{i=1}^j s_i\right)v_t^*=x_j \text{ for all } t\in[t_j,t_k] \text{ where } 1\le j\le k.\label{eq:fixing-invariant}
\end{align}
For the first step, apply Lemma~\ref{lem:absorption} with $x=0$, $y=b$, $\bar s=\bar s_1$, $r=r_2$, and $\varepsilon=\varepsilon_1$. All hypotheses hold trivially. Reparameterizing the resulting unitary path to $[0,t_1]$, we obtain $v_t$, $x_1$, and $y_1$ satisfying \eqref{eq:ind-rank}-\eqref{eq:ind-partial}; \eqref{eq:fixing-invariant} can be trivially verified for $t=t_1$.

Suppose that the construction has been completed through $t=t_k$. Apply Lemma~\ref{lem:absorption} with $x=x_k$, $y=y_k$, $\bar s=v_{t_k}\bar s_{k+1}v_{t_k}^*$, $r=v_{t_k}r_{k+2}v_{t_k}^*$, and $\varepsilon=\varepsilon_{k+1}$. The spectral-measure and noncompactness hypotheses are invariant under unitary conjugacy, and the rank condition is exactly \eqref{eq:ind-rank}. We get $x_k\perp y_k$ by construction, whereas the other orthogonalities follow from \eqref{eq:ind-partial} and Proposition~\ref{prop:decomposition}(ii). Finally, $\bar s+r=v_{t_k}r_{k+1}v_{t_k}^*\precsim y_k$ by \eqref{eq:ind-tail}.

Reparametrize the path supplied by Lemma~\ref{lem:absorption} as $(u_t)_{t\in[t_k,t_{k+1}]}$ and extend $v$ to $[t_k,t_{k+1}]$ as $v_t=u_tv_{t_k}$. Let $x_{k+1}$ and $y_{k+1}$ be the elements supplied by the lemma. The conclusions of the lemma give \eqref{eq:ind-rank} and \eqref{eq:ind-partial} with $k+1$ in place of $k$. \eqref{eq:ind-tail} holds for the next stage since $r_{k+2}\sim v_{t_k}r_{k+2}v_{t_k}^*\precsim y_{k+1}$. Finally, conjugating by $u_t$ fixing $\Her(x_k)$ pointwise for all $t\in [t_k,t_{k+1}]$ ensures that \eqref{eq:fixing-invariant} persists, completing the induction.

The series $z=\sum_i v_{t_i}s_iv_{t_i}^*$ converges as $\sum_i\|s_i\|<\infty$. Also, $z=\lim_{k\to\infty}x_k\in\Her(b)$. Set
\[
h(t)=
\begin{cases}
v_tav_t^*,&0\le t<1,\\
z,&t=1.
\end{cases}
\]
For $t<1$, the element $h(t)$ is unitarily equivalent to $a$. If $t\ge t_k$, then \eqref{eq:fixing-invariant} gives
\[\|z-v_tav_t^*\|\le\left\|\sum_{i>k}v_{t_i}s_iv_{t_i}^*\right\|+\left\|v_t\left(\sum_{i>k}s_i\right)v_t^*\right\|\leq2\sum_{i>k}\|s_i\|,\]
the upper bound going to zero as $k\to \infty$. Hence $h$ is continuous on $[0,1]$.

Next, we verify that $z\sim b$. Put $w_i=v_{t_i}s_iv_{t_i}^*$. These are mutually orthogonal by construction. The sequences $x_k$ and $\sum_{i=1}^ks_i$ are increasing in operator order and converge respectively to $z$ and $a$. Thus, standard supremum properties from \cite{CowardElliottIvanescu2008} show that in $\Cu(A)$,
\[[z]=\sup_k[x_k]=\sup_k\left[\sum_{i=1}^kw_i\right]=\sup_k\sum_{i=1}^k\left[w_i\right]=\sup_k\sum_{i=1}^k\left[s_i\right]=\sup_k\left[\sum_{i=1}^ks_i\right]=[a]=[b].\]
Therefore, we get a path from $z$ to $b$ of Cuntz-equivalent elements from Lemma~\ref{lem:segment}. Concatenation with $h$ gives a path from $a$ to $b$ entirely inside their common Cuntz class.
\end{proof}

\end{document}